\documentclass[11pt,letterpaper,reqno]{amsart}
\usepackage[T1]{fontenc}
\usepackage[utf8]{inputenc}
\usepackage[margin=1in]{geometry}
\usepackage{amssymb}
\usepackage{parskip}

\newtheorem{theorem}{Theorem}[section]
\newtheorem{lemma}[theorem]{Lemma}
\newtheorem{prop}[theorem]{Proposition}

\newcommand{\la}{\langle}
\newcommand{\ra}{\rangle}
\newcommand{\li}{\left}
\newcommand{\ri}{\right}
\newcommand{\R}{\mathbb{R}}

\newcommand{\C}{\mathbb{C}}

\newcommand{\N}{\mathbb{N}}
\newcommand{\re}{\operatorname{Re}}
\newcommand{\im}{\operatorname{Im}}

\usepackage{xcolor}

\definecolor{darkred}{rgb}{0.93,0.0,0.0}
\definecolor{mygreen}{rgb}{0.0,0.65,0.0}
\usepackage[
  colorlinks=true,
  linkcolor=darkred,     
  urlcolor=darkred,      
  citecolor=mygreen, 
  filecolor=darkred
]{hyperref}

\title[Non-Elliptic Quadratic $\mathcal{PT}$--Symmetric  Operators]{Non-Elliptic Quadratic $\mathcal{PT}$--Symmetric Operators and Similarity to Self-Adjoint Operators}
\author{Stepan Malkov}
\address{Department of Mathematics, University of California, Los Angeles, CA 90095, USA.}
\email{malkov@math.ucla.edu}
\date{\today}
\numberwithin{equation}{section}
\begin{document}
\begin{abstract}
We prove a similarity result for non-self-adjoint non-elliptic quadratic $\mathcal{PT}$--symmetric operators with real spectrum satisfying a dynamical averaging condition. In particular, we show that such an operator is weakly similar to a self-adjoint operator precisely when the fundamental matrix of the associated quadratic form is diagonalizable.
\end{abstract}
\maketitle
\tableofcontents
\section{Introduction and theorem statement}
$\mathcal{PT}$--symmetric operators, which are operators on $L^2(\R^n)$ left invariant under a successive application of a parity operator
\begin{equation}
\mathcal{P}u(x_1,x_2,\ldots,x_n) = u((-1)^{i_1}x_1,\ldots, (-1)^{i_n} x_n), \quad i_j \in \{0,1\}, \quad 1 \leq j \leq n, \quad (i_1,\ldots,i_n) \neq 0,
\end{equation}
and the time reversal operator
\begin{equation}
    \mathcal{T}u(x) = \overline{u(x)}, \quad x \in \R^n,
\end{equation}
play an important role in quantum mechanics and have become a recent and active area of research in mathematical physics \cite{Bender2024PT}. While it is typically assumed that a physically meaningful quantum observable is given by a self-adjoint operator on some Hilbert space $\mathcal{H},$ $\mathcal{PT}$--symmetric operators can exhibit real spectrum without necessarily being self-adjoint. Such operators are known as \textit{exact $\mathcal{PT}$--symmetric} operators, and one generally cannot expect them to be similar to self-adjoint operators. Instead, exact $\mathcal{PT}$--symmetric operators can be related to self-adjoint operators using a weaker notion of similarity which preserves the spectrum. 

For $j=1,2,$ let $\mathcal{H}_j$ be separable complex Hilbert spaces, and let $\mathcal{A}_j : \mathcal{D}(\mathcal{A}_j) \subseteq \mathcal{H}_j \to \mathcal{H}_j$ be closed, densely defined operators such that $\text{Spec}(\mathcal{A}_j) \subseteq \C$ is discrete and consists of isolated eigenvalues of finite algebraic multiplicity. For each $\lambda \in \text{Spec}(\mathcal{A}_j),$ let $E_{\lambda,j} = \ker((\mathcal{A}_j-\lambda I)^N)$ be the generalized eigenspace of $\mathcal{A}_j$ associated to the eigenvalue $\lambda,$ where $N \geq N(\lambda,j)$ is large enough. We assume that there exist linear subspaces $\mathcal{S}_j \subseteq \mathcal{D}(\mathcal{A}_j), j=1,2,$ such that $\mathcal{A}_j(\mathcal{S}_j) \subseteq \mathcal{S}_j,$ and suppose that $\bigcup_{\lambda \in \text{Spec}(\mathcal{A}_j)}E_{\lambda,j} \subseteq \mathcal{S}_j.$ 

We shall say that the operators $\mathcal{A}_1, \mathcal{A}_2$ are \textit{weakly similar} if there exists a linear bijection $S: \mathcal{S}_1 \to \mathcal{S}_2$ such that
\begin{equation}\label{eq:similar}
    S\mathcal{A}_1u = \mathcal{A}_2Su, \quad u \in \mathcal{S}_1.
\end{equation}
One may then directly show (see \cite[Proposition 1.1]{Caliceti2012QuadraticPT}) that weakly similar operators are isospectral. We will now state the assumptions and the main result proven in this paper. Throughout, we work on $\R^n$ and let $X=(x,\xi)$ be the phase space variables associated to the phase space $(x,\xi) \in T^*\R^n \cong \R^{2n}.$

Let $q=q(X)$ be a complex-valued quadratic form on $\R^{2n}$ such that $\re q$ is a positive semi-definite quadratic form on $\R^{2n},$
\begin{equation}
    \re q \geq 0.    
\end{equation}
For $T>0,$ we may define the time-symmetric average 
\begin{equation}\label{eq:avg_defn}
    \la \re q \ra_{\im q,T}(X) := \frac{1}{2T} \int_{-T}^T \re q(e^{tH_{\im q}}X)dt, \quad X \in \R^{2n},
\end{equation} 
where $H_b:= \partial_\xi b \cdot \partial_x-\partial_x b \cdot \partial_\xi$ is the Hamilton vector field associated to a symbol $b \in C^\infty(\R^{2n};\R).$
We will assume that the quadratic form $q$ satisfies the dynamical averaging condition 
\begin{equation}\label{ass_dyn_1}
    \la \re q\ra_{\im q,T}(X)>0, \quad 0 \neq X \in \R^{2n},
\end{equation}
for some (and hence for all) $T>0.$ 

\textit{Remark.} The assumption (\ref{ass_dyn_1}) is equivalent to the condition that the quadratic form $q$ satisfies
\begin{equation}\label{eq:sing_space}
    S:=\{X \in \R^{2n}: (H^k_{\im q} \re q)(X)=0,  \: \forall k \in \N_0\}=\{0\}.
\end{equation}
In the terminology of \cite{hitrik_starov_quadraticoperators}, \cite{hitrik_starov_i}, this is equivalent to the statement that the singular space $S \subseteq \R^{2n}$ associated to the quadratic form $q$ satisfies $S=\{0\}.$

Let $q^w(x,D)$ be the Weyl quantization of the quadratic symbol $q,$
\begin{equation}
    (q^w(x,D)u)(x) =(\text{Op}^w(q)u)(x):= \frac{1}{(2\pi)^n} \iint_{\R^{2n}} e^{i(x-y) \cdot \theta} q\li(\frac{x+y}{2},\theta\ri)u(y)dyd\theta,
\end{equation}
considered as a closed densely defined operator on $L^2(\R^n)$ with maximal domain
\begin{equation}
    \mathcal{D}(q^w(x,D)) = \{u \in L^2(\R^n): q^w(x,D)u \in L^2(\R^n)\}.
\end{equation}
Under the assumption (\ref{ass_dyn_1}), the spectrum of the quadratic operator $q^w(x,D)$ on $L^2(\R^n)$ is discrete and consists of isolated eigenvalues of finite algebraic multiplicity \cite{hitrik_starov_quadraticoperators}, and the generalized eigenfunctions of $q^w(x,D)$ lie in the Schwartz space $\mathcal{S}(\R^n)$ \cite[Section 2]{hitrik_starov_ii}. 

Following \cite{Caliceti2012QuadraticPT}, we will now introduce a $\mathcal{PT}$--symmetry assumption on the quadratic symbol $q.$ Let $\kappa: \R^n \to \R^n$ be a linear involution, that is, a linear map which satisfies
\begin{equation}
    \kappa^2 =I.
\end{equation}
Associated to the involution $\kappa$ is the parity operator $\mathcal{P},$ given by 
\begin{equation}
    (\mathcal{P}u)(x) = u(\kappa(x)). 
\end{equation}
We also define the time reversal operator $\mathcal{T}$ as
\begin{equation}
    (\mathcal{T}u)(x) = \overline{u(x)}.
\end{equation}
On the level of symbols, $\mathcal{P}$--symmetry for $q,$ 
\begin{equation}
    [q^w,\mathcal{P}]=0,
\end{equation}
is given by the condition
\begin{equation}
q(\kappa(x),\kappa^T(\xi))=q(x,\xi), \quad (x,\xi) \in \R^{2n},
\end{equation}
while $\mathcal{T}$--symmetry,
\begin{equation}
    [q^w,\mathcal{T}]=0,
\end{equation}
is given by
\begin{equation}
    \overline{q(x,-\xi)}=q(x,\xi), \quad (x,\xi) \in \R^{2n}.
\end{equation}
We require that the quadratic symbol $q$ is  $\mathcal{PT}$--symmetric with respect to $\kappa,$ thus satisfying
\begin{equation}\label{pt_ass}
    \overline{q \circ \mathcal{K}} = q \Longleftrightarrow \overline{q(\kappa(x),-\kappa^T(\xi))} = q(x,\xi), \quad (x,\xi) \in \R^{2n},
\end{equation}
for the map
\begin{equation}
    \mathcal{K}: \R^{2n} \to \R^{2n}, \quad \mathcal{K}(x,\xi) = (\kappa(x),-\kappa^T(\xi)). 
\end{equation}
Indeed, since
\begin{equation} (\mathcal{PT}) \circ \text{Op}^w(q) \circ (\mathcal{PT})^{-1}=\text{Op}^w(\overline{q \circ \mathcal{K}}),
\end{equation}
the assumption (\ref{pt_ass}) is equivalent to the statement that 
\begin{equation}\label{ass_pt_sym}
[q^w,\mathcal{PT}]=0.
\end{equation} 
The $\mathcal{PT}$--symmetry assumption (\ref{ass_pt_sym}) implies that $\text{Spec}(q^w(x,D))$ is symmetric with respect to the real axis.
Finally, we will assume that $q^w(x,D)$ has purely real spectrum on $L^2(\R^n),$ 
\begin{equation}\label{ass_spec_real}
    \text{Spec}(q^w(x,D)) \subseteq \R.
\end{equation}
We let 
\begin{equation}
    \sigma((x,\xi),(y,\eta)) := \xi \cdot y - \eta \cdot x, \quad (x,\xi)=X \in \R^{2n}, \quad (y,\eta)=Y \in \R^{2n},
\end{equation}
denote the canonical symplectic form on $\R^{2n}$ and its extension to the complex symplectic form on $\C^{2n}.$ We also let $q=q(X,Y): \C^{2n} \times \C^{2n} \to \C$ denote the polarization of $q,$ viewed as a symmetric bilinear form on $\C^{2n}.$
Following \cite[Section 21.5]{Hormander1985}, let $F: \C^{2n} \to \C^{2n}$ denote the Hamilton map associated to $q,$ defined via the identity 
\begin{equation}\label{eq:sigma_f_defn}
    q(X,Y) = \sigma(X,FY), \quad X,Y \in \C^{2n}.
\end{equation}
We note that $F$ is skew-symmetric with respect to $\sigma$ and is given in canonical coordinates by the \textit{fundamental matrix} 
\begin{equation}\label{eq:fund_matrix}
    F = \frac12\begin{bmatrix}
        \partial^2_{\xi x}q & \partial^2_{\xi\xi}q \\
        -\partial^2_{xx}q & -\partial^2_{x\xi}q
    \end{bmatrix},
\end{equation} 
where we use the notation $\partial^2_{\xi x}q=(\partial^2_{\xi_i x_j}q)_{1 \leq i,j \leq n}.$

The following theorem is the main result of this work, which generalizes the result of \cite{Caliceti2012QuadraticPT} to non-elliptic quadratic operators satisfying the averaging condition (\ref{ass_dyn_1}).

\begin{theorem}\label{thm_pt}
    Let $q: \R^{2n} \to \C$ be a complex-valued quadratic form that satisfies the averaging condition \textup{(\ref{ass_dyn_1})}, and let $\kappa: \R^n \to \R^n$ be a linear involution. Assume that the operator $q^w(x,D)$ satisfies the assumptions $\textup{(\ref{ass_pt_sym})}$ and $\textup{(\ref{ass_spec_real})}.$ Then, $q^w(x,D)$ is weakly similar to a self-adjoint operator in the sense of definition \textup{(\ref{eq:similar})} if and only if the fundamental matrix $F$ of $q$ is diagonalizable.
\end{theorem}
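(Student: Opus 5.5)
We use the spectral description of quadratic operators satisfying the averaging condition (\ref{ass_dyn_1}), due to Hitrik--Pravda-Starov, together with the approach of \cite{Caliceti2012QuadraticPT}. Under (\ref{ass_dyn_1}), the fundamental matrix $F$ has no real eigenvalues. Its spectrum is symmetric under $\lambda \mapsto -\lambda$, since $F$ is $\sigma$-skew. We therefore split the eigenvalues into $\lambda_1,\dots,\lambda_n$, lying in the half-plane $\{\im \lambda>0\}$ up to the normalization used in \cite{hitrik_starov_quadraticoperators}, and $-\lambda_1,\dots,-\lambda_n$. The spectrum of $q^w(x,D)$ is then
\begin{equation*}
\Big\{ \sum_{j=1}^n \frac{\lambda_j}{i}(2k_j+1) \;:\; k\in\N_0^n\Big\},
\end{equation*}
counted with algebraic multiplicity. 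The generalized eigenfunctions are $p(x)u_0(x)$, where $u_0$ is a Gaussian ground state and $p$ is a polynomial. The span of all generalized eigenfunctions is exactly the space $\mathcal{S}_1$ of such polynomial-times-Gaussian functions, and it is dense and invariant. We take $\mathcal{S}_1$ as the invariant subspace in definition (\ref{eq:similar}). Under (\ref{ass_spec_real}) each $\mu_j:=\lambda_j/i$ must be real; otherwise shifting $k_j$ would produce non-real eigenvalues. Taking $k_j$ large forces all $\mu_j$ to have the same sign, which we may take positive, since $\re q\ge 0$.

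\emph{Sufficiency.} Assume $F$ is diagonalizable. Let $V_+$ be the sum of the eigenspaces for the $\lambda_j$; it is Lagrangian and positive in the sense of Hitrik--Pravda-Starov. Using the standard creation--annihilation operator framework, we pick bases of $V_\pm$ consisting of eigenvectors, normalized so that $\sigma(e_j^-,e_k^+)=\delta_{jk}$. Since $F$ is diagonalizable, $q(X)=\sum_j \mu_j\, (\ell_j^+ (X))(\ell_j^-(X))\cdot c$ up to the normalization $c$, with linear forms $\ell_j^\pm$. Quantizing gives operators $a_j=\ell_j^{-,w}$ and $a_j^*{}'=\ell_j^{+,w}$, which satisfy canonical commutation relations, annihilate $u_0$ in the appropriate sense, and give
\begin{equation*}
q^w=\sum_j \mu_j\Big(a_j' a_j + \tfrac12\Big).
\end{equation*}
The functions $u_k=(a')^k u_0$ are eigenfunctions that span $\mathcal{S}_1$ with no Jordan blocks. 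We then define $S u_k = h_k/\sqrt{k!}$-normalized Hermite functions, so $S$ maps $\mathcal{S}_1$ bijectively onto finite Hermite combinations. This intertwines $q^w$ with the self-adjoint harmonic oscillator $\sum_j \mu_j(x_j^2+D_j^2)/2$.

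\emph{Necessity.} Suppose $F$ is not diagonalizable. Then some $\lambda_j$ has a nontrivial Jordan block in $F|_{V_+}$. Here we use the theorem of Hitrik--Pravda-Starov (or an explicit computation via the normal form of $F|_{V_+}$): the map $u_0 \mapsto$ first excited states corresponds to $F|_{V_+}$. Indeed, the action of $q^w$ on the span of $\{p u_0: \deg p\le 1\}$ modulo constants is conjugate to $\tfrac1i F|_{V_+}$ (transposed) plus a constant shift. Hence $q^w$ has a nontrivial Jordan block on the generalized eigenspace at the eigenvalue $\sum_k \mu_k/2+\mu_j$. Weak similarity on invariant subspaces containing all generalized eigenspaces preserves Jordan structure, because $S$ maps $\ker(\mathcal{A}_1-\lambda)^m$ bijectively onto $\ker(\mathcal{A}_2-\lambda)^m$. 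A self-adjoint operator has no Jordan blocks, which gives a contradiction.

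The main obstacle is the sufficiency direction: showing that the eigenfunctions span all of $\mathcal{S}_1$ and identifying the polynomial structure rigorously in the non-elliptic setting. We plan to handle it through the FBI/Bargmann reduction of \cite{hitrik_starov_quadraticoperators}, in which $q^w$ becomes a first-order operator $Mz\cdot\partial_z$ plus a constant on polynomials. There, the diagonalizability of $M$ (equivalently of $F|_{V_+}$) is exactly what decides semisimplicity.
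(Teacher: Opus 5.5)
Your proposal is correct and is essentially the paper's argument. Necessity uses the same observation, that weak similarity to a self-adjoint operator forces generalized eigenfunctions to be eigenfunctions, tested on the degree-one states where the Jordan structure of $F|_{V_+}$ appears. Your intertwiner $(a')^k u_0\mapsto h_k$ is the paper's $S=I$ on holomorphic polynomials (from $H_{\Phi_1}$ to the standard Fock space), transported back to $L^2(\R^n)$ by the FBI and Bargmann transforms. Only the constants need fixing: with $\mu_j=\lambda_j/i$ one has $q^w=\sum_j 2\mu_j(a_j'a_j+\frac12)$, so the target oscillator should be $\sum_j\mu_j(x_j^2+D_j^2)$ and the Jordan block sits at $\sum_k\mu_k+2\mu_j$. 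Also, $\mu_j>0$ follows at once from $\im\lambda_j>0$, so no sign argument is needed.
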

The plan of this paper is as follows. Following the ideas developed in \cite{Caliceti2012QuadraticPT}, we will prove Theorem \ref{thm_pt} in Section \ref{thm_pt_proof} by
reducing the stable and unstable Lagrangian manifolds associated to the Hamilton flow of $\frac{q}{i}$ to a normal form using the techniques of FBI transforms with quadratic phases, which will allow us to explicitly define the transformation $S$ satisfying the similarity condition (\ref{eq:similar}). Section \ref{example} discusses necessary and sufficient conditions for application of Theorem \ref{thm_pt} to a quadratic Kramers-Fokker-Planck operator. Finally, the appendix contains a proof of a density result for holomorphic polynomials in Bargmann spaces with strictly convex quadratic exponential weights, which is used in the proof of Theorem \ref{thm_pt}. 
 
\section{Similarity transformation and proof of Theorem \ref{thm_pt}}\label{thm_pt_proof}
The purpose of this section is to provide an explicit similarity transformation for $q^w(x,D)$ into a normal form and thereby complete the proof of Theorem \ref{thm_pt}. In particular, this approach is inspired by and closely follows the methods used in \cite{Caliceti2012QuadraticPT} and \cite{Viola2013}.

Let $q: \R^{2n} \to \C$ be a quadratic form that satisfies the assumptions (\ref{ass_dyn_1}), (\ref{ass_pt_sym}), (\ref{ass_spec_real}), and let $F$ denote the $2n \times 2n$ fundamental matrix of $q.$ For each eigenvalue $\lambda \in \text{Spec}(F),$ we define the associated generalized eigenspace 
\begin{equation}
  E_\lambda = \ker ((F-\lambda I)^{2n}) \subseteq \C^{2n}.  
\end{equation}
We now consider the unstable linear manifold 
\begin{equation}
    \Lambda^+ = \bigoplus_{\lambda \in \text{Spec}(F), \im \lambda>0} E_\lambda \subseteq \C^{2n}
\end{equation}
and the stable linear manifold 
\begin{equation}
    \Lambda^- = \bigoplus_{\lambda \in \text{Spec}(F), \im \lambda<0} E_\lambda \subseteq \C^{2n}
\end{equation}
associated to the Hamilton flow of $\frac{q}{i}.$ We note that $\lambda \in \text{Spec}(F)$ if and only if $-\lambda \in \text{Spec}(F)$ \cite[Section 21.5]{Hormander1985}, with agreement of algebraic multiplicities. Moreover, if $q$ satisfies (\ref{ass_dyn_1}), it follows from \cite[Section 6]{malkov2026harmonic} that there exists an I-Lagrangian, R-symplectic linear manifold $\Lambda \subseteq \C^{2n}$ such that $\re q|_\Lambda>0$ in the sense of quadratic forms. In view of (\ref{eq:sigma_f_defn}), this implies that $\text{Spec}(F) \cap \R = \varnothing,$ so we conclude that there are exactly $n$ eigenvalues of $F$ in the upper and lower half-plane (counting multiplicity), respectively.  

The arguments of \cite[Proposition 3.3]{Sjostrand1974} show that the manifolds $\Lambda^+$ and $\Lambda^-$ are complex Lagrangian planes, and since $F(\Lambda^-) \subseteq \Lambda^-, F(\Lambda^+) \subseteq \Lambda^+,$ we see that 
\begin{equation}
    q(X,Y) = \sigma(X,FY) = 0, \quad X,Y \in \Lambda^- 
\end{equation}
and
\begin{equation}
    q(X,Y) = \sigma(X,FY) = 0, \quad X,Y \in \Lambda^+.
\end{equation}
We now use the averaging assumption (\ref{ass_dyn_1})  to conclude the sign definiteness of $\Lambda^+$ and $\Lambda^-.$ In particular, it is shown in  \cite[Proposition 2.1]{Viola2013} that $\Lambda^+$ is a strictly positive Lagrangian plane, in the sense that
\begin{equation}\label{eq:lambda_pos}
    \frac{1}{i} \sigma(X,\overline{X})>0, \quad X \neq 0, \quad X \in \Lambda^+, \ 
\end{equation}
and that $\Lambda^-$ is a strictly negative Lagrangian plane, in the sense that
\begin{equation}\label{eq:lambda_neg}
    \frac{1}{i} \sigma(X,\overline{X})<0, \quad X \neq 0, \quad X \in \Lambda^-, \ 
\end{equation}
(see \cite[Proposition 3.3]{Sjostrand1974}, and also \cite[Appendix A]{Caliceti2012QuadraticPT} for a general discussion of positivity). 

We now describe the normal form reduction via an associated FBI transform. A holomorphic quadratic form $\varphi: \C^n_x \times \C^n_y \to \C$ satisfying 
\begin{equation}\label{eq:phase_adm}
    \im \partial^2_{yy} \varphi >0, \quad \det \partial_{xy}^2 \varphi \neq 0
\end{equation}
gives rise to the Fourier-Bros-Iagolnitzer (FBI) transform
\begin{equation}
    T_\varphi u(x) = c_{\varphi,n} \int_{\R^n} e^{i\varphi(x,y)}u(y)dy, \quad x \in \C^{n},
\end{equation}
where $c_{\varphi,n}>0$ is a constant such that $T_\varphi$ is unitary as a map
\begin{equation}
    T_\varphi: L^2(\R^{n}) \to H_\Phi(\C^n).
\end{equation}
Here, the Bargmann space $H_\Phi(\C^n)$ is defined as 
\begin{equation}
    H_\Phi(\C^n) :=H(\C^n) \cap L^2_\Phi(\C^n),
\end{equation} 
where $L^2_\Phi(\C^n)$ is the exponentially weighted $L^2$--space
\begin{equation}
    L^2_{\Phi}(\C^n):= L^2(\C^n,e^{-2\Phi}d\mu), \quad \mu := \text{Lebesgue measure on }\C^n,
\end{equation}
and 
\begin{equation}\label{eq:phi_defn}
    \Phi(x):=\sup_{y \in \R^n} (-\im \varphi(x,y))
\end{equation}
is strictly plurisubharmonic (for an overview of FBI transform tools, see \cite[Section 4]{malkov2026harmonic}). Associated to $T_\varphi$ is a complex-linear canonical transformation $\kappa_{T_\varphi},$ implicitly given by
\begin{equation}\label{eq:canonical_defn}
    \kappa_{T_\varphi}:(y,-\partial_y\varphi(x,y)) \to (x,\partial_x \varphi(x,y)), \quad x,y \in \C^n, 
\end{equation}
and an I-Lagrangian, R-symplectic linear subspace $\Lambda_{\Phi}$ defined by
\begin{equation}\label{eq:lambda_phi_defn}
\Lambda_{\Phi}:=\kappa_{T_\varphi}(\R^{2n})=\li\{\li(x,\frac{2}{i}\partial_x \Phi(x)\ri): x \in \C^n\ri\} \subseteq \C^{2n}.
\end{equation}
For a quadratic form $q$ satisfying the averaging condition (\ref{ass_dyn_1}), we then define the Weyl quantization $\widetilde{q}^w(x,D)$ of the holomorphic quadratic form $\widetilde{q} = q \circ \kappa_{T_\varphi}^{-1}$ on $\C^{2n}$ as an unbounded operator acting on $H_\Phi(\C^n)$ and given by 
\begin{equation}
    (\widetilde{q}^w(x,D)u)(x):= \frac{1}{(2\pi )^n} \iint_{\Gamma(x)} e^{i (x-y) \cdot \theta} \widetilde{q}\li(\frac{x+y}{2},\theta\ri) u(y)dy \wedge d\theta,
\end{equation}
with
\begin{equation}
    \Gamma(x):= \li\{\li(y,\frac{2}{i}\partial_x \Phi\li(\frac{x+y}{2}\ri)+ic (\overline{x-y})\ri): y \in \C^n\ri\}, \quad c>0.
\end{equation}
When equipped with its maximal domain, $\widetilde{q}^w(x,D)$ becomes a closed densely defined operator on $H_\Phi(\C^n)$ with discrete spectrum (see \cite{hitrik_starov_quadraticoperators}). The FBI transform satisfies an exact version of Egorov's theorem, yielding
\begin{equation}\label{eq:egorov}
    T_\varphi \circ q^w = \widetilde{q}^w \circ T_\varphi.
\end{equation}
As a consequence of (\ref{eq:lambda_pos}) and (\ref{eq:lambda_neg}), we may now apply the example in \cite[Section 1.2]{almost_holomorphic_ref} to conclude that there exists a holomorphic quadratic form $\varphi,$ satisfying (\ref{eq:phase_adm}), and a quadratic exponential weight $\Phi_0: \C^n \to \R$ such that $\varphi$ is associated to a unitary FBI transform $T: L^2(\R^n) \to H_{\Phi_0}(\C^n)$ and a canonical linear transformation $\kappa_T$ which satisfies
\begin{equation}
    \kappa_T(\Lambda^-)= \{(x,\xi) \in \C^{2n}: x=0\}
\end{equation}
and
\begin{equation}
    \kappa_T(\Lambda^+) = \{(x,\xi) \in \C^{2n}: \xi =0\}.
\end{equation}
 The strict positivity of $\Lambda^+$ and an application of \cite[Proposition A.9]{Caliceti2012QuadraticPT} further shows that
\begin{equation}\label{eq:convex_0}
    \Phi_0(x) \sim |x|^2, \quad x \in \C^n.
\end{equation}
Thus, the quadratic form $\widetilde{q}=q \circ \kappa_T^{-1}$ vanishes on $\kappa_T(\Lambda^+)$ and $\kappa_T(\Lambda^-),$ so we conclude that $\widetilde{q}(x,\xi) =Rx \cdot \xi$ for some complex $n \times n$ matrix $R.$ Moreover, we note that the fundamental matrix $\widetilde{F}$ of $\widetilde{q}$ satisfies
\begin{equation}
    \widetilde{F} = \kappa_T \circ F \circ \kappa_T^{-1} = \frac12 \begin{bmatrix}
        R & 0 \\
        0 & -R^T\\
    \end{bmatrix},
\end{equation}
and hence $R:\C^n \to \C^n$ is similar to $2F: \Lambda^+ \to \Lambda^+.$

Lastly, we bring $R$ into Jordan normal form. If $M$ is the Jordan normal form of $R$ and $R=PMP^{-1}$ its Jordan decomposition, then the canonical linear transformation
\begin{equation}
    \C^{2n} \ni (x,\xi) \to (P^{-1}x,P^T\xi) \in \C^{2n}
\end{equation}
is associated to the unitary operator $U: H_{\Phi_0}(\C^n) \to H_{\Phi_1}(\C^n)$ given by 
\begin{equation}
Uu(x) = |\det P| u(Px),    
\end{equation}
where $\Phi_1$ is the rescaled quadratic weight 
\begin{equation}
    \Phi_1(x)=\Phi_0(Px).
\end{equation}
In view of (\ref{eq:convex_0}), we also note that 
\begin{equation}
    \Phi_1(x) \sim |x|^2, \quad x \in \C^n.
\end{equation}
Following a unitary conjugation by $U$, one obtains an operator whose Weyl symbol is of the form $\widetilde{q}(x,\xi) = Mx \cdot \xi,$ where $M$ is a matrix in Jordan normal form. It follows from (\ref{eq:fund_matrix}) that the fundamental matrix of this quadratic form is given by 
\begin{equation}
    \widetilde{F} = \frac12\begin{bmatrix}
        M & 0 \\
        0 & -M^T \\
    \end{bmatrix},
\end{equation}
so in particular we conclude that
\begin{equation}
    \text{Spec}(M) =\text{Spec}(2F) \cap \{\im \lambda>0\}.
\end{equation}

The above discussion may be summarized in the following result (see also \cite[Proposition 2.2]{Viola2013}).
\begin{prop}\label{prop_unitary}
    Let $q: \R^{2n} \to \C$ be a quadratic form satisfying the averaging condition \textup{(\ref{ass_dyn_1})}, and let $F$ denote its fundamental matrix. Then, there exists a strictly convex quadratic weight function $\Phi_1: \C^n \to \R$ and a unitary FBI transform $T: L^2(\R^n) \to H_{\Phi_1}(\C^n)$ such that the operator \begin{equation}
        q^w(x,D): L^2(\R^n) \to L^2(\R^n),
    \end{equation}
    equipped with its maximal domain 
    \begin{equation}
        \mathcal{D}(q^w(x,D)) =\{u \in L^2(\R^n): q^w(x,D)u \in L^2(\R^n)\},
    \end{equation}
    is unitarily equivalent via $T$ to a quadratic operator
    \begin{equation}
    \widetilde{q}^w(x,D): H_{\Phi_1}(\C^n) \to H_{\Phi_1}(\C^n),
    \end{equation}
    equipped with maximal domain 
    \begin{equation}
         \mathcal{D}(\widetilde{q}^w) =\{u \in H_{\Phi_1}(\C^n): \widetilde{q}^w(x,D)u \in H_{\Phi_1}(\C^n)\}.
    \end{equation}
    Here, $\widetilde{q}(x,\xi) = Mx \cdot \xi,$ where the complex $n \times n$ matrix $M$ is in Jordan normal form and the eigenvalues of $M,$ counting algebraic multiplicity, are precisely the eigenvalues of $2F$ in the upper half-plane. 
\end{prop}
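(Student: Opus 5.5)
The proposition packages the reduction just carried out, so I will assemble it in three steps: produce the composite transform, prove that it conjugates the operators with their maximal domains, and read off the spectral statement.

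\textbf{Step 1: the transform.} Take the FBI transform $T_0:=T_\varphi: L^2(\R^n)\to H_{\Phi_0}(\C^n)$ with $\kappa_{T_0}(\Lambda^-)=\{x=0\}$ and $\kappa_{T_0}(\Lambda^+)=\{\xi=0\}$, supplied by the example in \cite[Section 1.2]{almost_holomorphic_ref} via (\ref{eq:lambda_pos}) and (\ref{eq:lambda_neg}). Compose it with the unitary dilation $U$ built from the Jordan decomposition $R=PMP^{-1}$, and set $T:=U\circ T_0$. Since $Uu(x)=|\det P|\,u(Px)$, the map $U\circ T_0$ is again an integral operator with holomorphic quadratic phase $\varphi(Px,y)$. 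This phase still satisfies (\ref{eq:phase_adm}), because $\partial^2_{yy}$ is unchanged and $\det\partial^2_{xy}$ is multiplied by $\det P\neq0$. So $T$ is a unitary FBI transform onto $H_{\Phi_1}(\C^n)$ with $\Phi_1=\Phi_0\circ P$. Strict convexity of $\Phi_1$ follows from (\ref{eq:convex_0}) and the invertibility of $P$.

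\textbf{Step 2: conjugation with maximal domains.} By the exact Egorov identity (\ref{eq:egorov}) applied to $T_0$, and the elementary identity $U\,\mathrm{Op}^w(a)\,U^{-1}=\mathrm{Op}^w(a\circ\kappa_U^{-1})$ for the linear symplectic map $\kappa_U(x,\xi)=(P^{-1}x,P^T\xi)$, we get $T q^w T^{-1}=\widetilde q^w$ with $\widetilde q(x,\xi)=Mx\cdot\xi$. The identity $T q^w T^{-1}=\widetilde q^w$ holds a priori on a dense core, for instance on $\mathcal S(\R^n)$ and its image. Both operators are the maximal realizations, which can be defined distributionally. On $L^2$, $u\in\mathcal D(q^w)$ means $q^w u\in L^2$ in the sense of tempered distributions. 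On $H_{\Phi_1}$, the operator $\widetilde q^w$ is a holomorphic differential operator, so its maximal domain is $\{u\in H_{\Phi_1}: \widetilde q^w u\in H_{\Phi_1}\}$. I would show that $T$ intertwines the distributional actions, which gives $T(\mathcal D(q^w))=\mathcal D(\widetilde q^w)$. \textbf{This is the main technical point.} My first attempt is to verify it by duality: $T$ is unitary, and the formal adjoints satisfy the same Egorov relation, since $\overline q$ corresponds to the adjoint. The maximal operator is then the adjoint of the minimal realization of the adjoint symbol. Equivalently, one can cite that closure of the operator on $\mathcal S$ gives the maximal one \cite{hitrik_starov_quadraticoperators}.

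\textbf{Step 3: spectrum of $M$.} From the computation $\widetilde F=\kappa_T F\kappa_T^{-1}=\frac12\,\mathrm{diag}(M,-M^T)$, the restriction of $F$ to $\Lambda^+$ is conjugate to $\frac12 M$, because $\kappa_T(\Lambda^+)=\{\xi=0\}$ is the $x$-block. By definition, $\Lambda^+$ is the sum of the generalized eigenspaces of $F$ with $\im\lambda>0$. Therefore $\mathrm{Spec}(M)$, with algebraic multiplicities, equals the eigenvalues of $2F$ in the open upper half-plane.
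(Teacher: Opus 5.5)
Your proposal is correct and follows the paper's own route. You use the FBI transform straightening $\Lambda^-$ and $\Lambda^+$ to $\{x=0\}$ and $\{\xi=0\}$, compose it with the Jordan-normal-form dilation $U$ (so that $\Phi_1=\Phi_0\circ P$), and read off $\text{Spec}(M)$ from $\widetilde F=\frac12\,\text{diag}(M,-M^T)$ on $\kappa_T(\Lambda^+)=\{\xi=0\}$, exactly as the paper does. Your Step 2 on maximal domains is more careful than the paper, which simply asserts the unitary equivalence. The quickest way to finish it is this: $T$ extends injectively to $\mathcal{S}'(\R^n)$ and the exact Egorov identity holds there, so $q^w u\in L^2$ if and only if $\widetilde q^w Tu=T(q^w u)\in H_{\Phi_1}$.
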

Letting $\lambda_1,\ldots, \lambda_n$ denote the eigenvalues of $F$ in the upper half-plane, Proposition \ref{prop_unitary} implies that the action of $\widetilde{q}^w(x,D)$ is given by
\begin{equation}\label{eq:q_fbi_reduced}
    \widetilde{q}^w(x,D)u = \frac1i\sum_{j=1}^n \lambda_ju +2\sum_{j=1}^n \lambda_j x_jD_{x_j}u+\sum_{j=1}^{n-1} \gamma_jx_{j+1}D_{x_j}u, \quad \gamma_j \in \{0,1\}, \quad u \in \mathcal{D}(\widetilde{q}^w(x,D)).
\end{equation}
We now argue that for the standard exponential weight $\Phi(x) = \frac12 |x|^2,$ the operator $\widetilde{q}^w(x,D)$ is a densely defined unbounded operator on $H_\Phi(\C^n).$ Indeed, we claim that the space 
\begin{equation}\label{eq:hol_poly}
    \mathcal{S}_1 = \text{span}\{x^\alpha: x \in \C^n,\alpha \in \N_0^n\}
\end{equation}
of holomorphic polynomials is a dense subspace of both $H_{\Phi}(\C^n)$ and $H_{\Phi_1}(\C^n).$ Both conclusions follow as a direct consequence of Lemma \ref{lemma_dense}, stated and proved in the appendix.

Now, if $q^w(x,D)$ satisfies (\ref{ass_pt_sym}), it follows from (\ref{eq:real_im_spec}) that the assumption (\ref{ass_spec_real}) is equivalent to the statement that $\lambda_1,\ldots, \lambda_n \in i \R.$ If $F$ is diagonalizable, then the similarity of $F$ and $\widetilde{F}$ implies that $M$ must be diagonal, and hence $\gamma_1 = \ldots = \gamma_{n-1}=0.$ Then, the 
symbol of the quadratic operator
\begin{equation}\label{eq:q_fbi_self_adj}
   \widetilde{q}^w(x,D):H_\Phi(\C^n) \to H_\Phi(\C^n), \quad  \widetilde{q}^w(x,D) = \frac1i \sum_{j=1}^n \lambda_j +2 \sum_{j=1}^n \lambda_j x_jD_{x_j},
\end{equation}
satisfies 
\begin{equation}
    \widetilde{q}\li(x, \frac2i \partial_x \Phi(x)\ri) = 2\sum_{j=1}^n \frac{\lambda_j}{i} |x_j|^2,
\end{equation}
and is hence real-valued and elliptic on $\Lambda_\Phi.$ Thus, the operator (\ref{eq:q_fbi_self_adj}) has discrete spectrum and is self-adjoint when equipped with its maximal domain. Moreover, the strict convexity of $\Phi_1$ and the explicit description of the basis of eigenfunctions of $\widetilde{q}^w(x,D)$ in \cite[Lemma 4.3]{Viola2013} together show that the eigenfunctions of (\ref{eq:q_fbi_self_adj}) on $H_\Phi(\C^n)$ and $H_{\Phi_1}(\C^n)$ are contained in the space (\ref{eq:hol_poly}) of holomorphic polynomials. We therefore see that $\widetilde{q}^w(x,D): H_{\Phi_1}(\C^n) \to H_{\Phi_1}(\C^n)$ is weakly similar to the self-adjoint operator $\widetilde{q}^w(x,D): H_\Phi(\C^n) \to H_\Phi(\C^n)$ in the sense of definition (\ref{eq:similar}) using the similarity operator $S=I$ on the dense subspace (\ref{eq:hol_poly}) of holomorphic polynomials. 

Conversely, if $q^w(x,D)$ is weakly similar to a self-adjoint operator $A$ via some linear bijection $S,$ then for any $\lambda \in \text{Spec}(q^w(x,D)),$ any $N \geq 1,$ and any generalized eigenfunction $u \in \ker((q^w-\lambda)^N)$ of $q^w(x,D),$ one has
\begin{equation}
    S(q^w(x,D)-\lambda)^Nu= (A-\lambda)^N Su =0 \implies (A-\lambda)Su=0,
\end{equation}
which implies that $u$ is an eigenfunction of $q^w(x,D)$ with eigenvalue $\lambda.$ Fix $\alpha = (\alpha_1, \ldots,\alpha_n) \in \N_0^n,$ set $\lambda_\alpha = \sum_{j=1}^n \frac{\lambda_j}{i} (1+2\alpha_j),$ and note that $\lambda_\alpha \in \text{Spec}(q^w(x,D)) = \text{Spec}(\widetilde{q}^w(x,D))$ in view of (\ref{eq:eig_formula_kfp}). Then, (\ref{eq:q_fbi_reduced}) shows that $x^\alpha$ is a generalized eigenfunction of $\widetilde{q}^w(x,D)$ on $H_{\Phi_1}(\C^n)$ with eigenvalue $\lambda_\alpha,$ and hence also an eigenfunction of $\widetilde{q}^w(x,D)$ with the same eigenvalue. Since $\alpha \in \N_0^n$ is arbitrary, this shows that
\begin{equation}
    \sum_{j=1}^{n-1} \gamma_j x_{j+1} D_{x_j} x^\alpha =0, \quad \forall \alpha \in \N_0^n, 
\end{equation}
which yields that $\gamma_1 = \ldots = \gamma_{n-1}=0,$ and since $F$ and $\widetilde{F}$ are similar, lets us conclude that $F$ is diagonalizable. This concludes the proof of Theorem \ref{thm_pt}.

\section{Example: quadratic Kramers-Fokker-Planck operator}\label{example}
In this section, we provide a necessary and sufficient condition for a quadratic Kramers-Fokker-Planck operator to satisfy the statement of Theorem \ref{thm_pt}. 

Let $V=V(x): \R^n \to \R$ be a real quadratic form. Extend $x \in \R^n$ to the coordinates $X=(x,y) \in \R^{2n}$ with the corresponding dual coordinates $\Xi=(\xi,\eta) \in \R^{2n},$ and fix the canonical coordinates $Y=(X,\Xi)=(x,y,\xi,\eta) \in \R^{4n}$ associated to the phase space $T^* \R^{2n}\cong \R^{4n}.$
For fixed $b>0,$ define the complex-valued quadratic form $q: \R^{4n} \to \C$ as
\begin{equation}\label{eq:quad_kfp}
    q(x,y,\xi,\eta) = \frac12 (|y|^2+|\eta|^2) + ib(y\cdot \xi - V'(x) \cdot \eta).
\end{equation}
Then, the Weyl quantization of $q$ is given by the quadratic Kramers-Fokker-Planck operator
\begin{equation}\label{eq:kfp_op}
    q^w(X,D_X)= \frac12 (|y|^2+D_y^2)+b(y \cdot \partial_x-V'(x) \cdot \partial_y).
\end{equation}
The operator $q^w(X,D_X)$ is a closed densely defined operator on $L^2(\R^{2n})$ with maximal domain
    \begin{equation}
        \mathcal{D}(q^w(X,D_X)) = \{u \in L^2(\R^{2n}): q^w(X,D_X)u \in L^2(\R^{2n})\}.
    \end{equation}
Let $A=V''$ denote the Hessian of $V,$ and let $A=P\Lambda P^{-1}$ be its orthogonal diagonalization, where the $n \times n$ matrix $\Lambda=\text{diag}(\mu_1,\ldots,\mu_n)$ is diagonal. The symplectic change of coordinates $\widetilde{\kappa}: \R^{4n} \to \R^{4n},$ given by
\begin{equation}
    \R^{4n} \ni (x,y,\xi,\eta) \to (Px,Py,P\xi,P\eta) \in \R^{4n},
\end{equation}
then corresponds to a unitary operator 
\begin{equation}
    U: L^2(\R^{2n}) \to L^2(\R^{2n}), \quad Uu(x,y) := u(Px,Py),
\end{equation}
such that the conjugated operator $U q^w(X,D_X)U^{-1}$ is the Weyl quantization of the complex-valued quadratic form $\widetilde{q} = q \circ \widetilde{\kappa},$ given by
\begin{equation}\label{eq:kfp_v_diag}
     \widetilde{q}(x,y,\xi,\eta) := \frac12 (|y|^2+|\eta|^2) + ib(y\cdot \xi - \Lambda x \cdot \eta).
\end{equation}
Then, if the averaging assumption (\ref{ass_dyn_1}) holds for $\widetilde{q},$ it also holds for $q.$ Moreover, the operator $\widetilde{q}^w(X,D_X),$ equipped with its maximal domain, is a closed densely defined operator on $L^2(\R^{2n})$ which is isospectral to $q^w(X,D_X),$ so if (\ref{ass_spec_real}) holds for $\widetilde{q}^w(x,D),$ it also holds for $q^w(x,D).$
We define the involution 
\begin{equation}
    \kappa: \R^{2n} \to \R^{2n}, \quad \kappa(x,y) = (-x,-y),
\end{equation}
and note that 
\begin{equation}
    \widetilde{q}(x,y,\xi,\eta) = \overline{\widetilde{q}(-x,-y,\xi,\eta)}.
\end{equation}
Since the associated parity operator $\mathcal{P}$ commutes with $U,$ if the $\mathcal{PT}$--symmetry assumption (\ref{ass_pt_sym}) with respect to $\kappa$ holds for $\widetilde{q}^w(x,D),$ then it also holds for $q^w(x,D).$ Hence, we may without loss of generality replace $q$ with $\widetilde{q}$ in the following discussion.

We claim that the averaging condition (\ref{ass_dyn_1}) holds under the assumption that the diagonal matrix $\Lambda$ in (\ref{eq:kfp_v_diag}) is invertible. We may compute
\begin{equation}
    \re q(x,y,\xi,\eta)=\frac12(|y|^2+|\eta|^2), \quad H_{\im q}=b(y \cdot \partial_x -\Lambda x \cdot \partial_y+\Lambda \eta \cdot \partial_\xi -\xi \cdot \partial_\eta), 
\end{equation}
\begin{equation}
    H_{\im q} \re q(x,y,\xi,\eta) = -b(\xi \cdot \eta +\Lambda x \cdot y),
\end{equation}
and
\begin{equation}
    H^2_{\im q} \re q(x,y,\xi,\eta)=b^2(|\Lambda x|^2+ |\xi|^2-\Lambda y \cdot y -\Lambda \eta \cdot \eta).
\end{equation}
Hence, invertibility of $\Lambda$ implies that the singular space $S$ of $q$ is
\begin{equation}\label{eq:sing_empty}
    S=\{Y \in \R^{4n}: H^k_{\im q} \re q(Y)=0, \: \forall k \in \N_0\}=\{0\},
\end{equation}
which yields the averaging condition (\ref{ass_dyn_1}) in view of the remark (\ref{eq:sing_space}) (see \cite{hitrik_starov_quadraticoperators}, \cite[Section 1]{hitrik_starov_i}).

We now argue that if $0 \neq b^2 \mu_j \leq \frac14$ for $j=1,2,\ldots,n,$ then the spectrum of the quadratic operator (\ref{eq:kfp_op}) consists of real eigenvalues of finite algebraic multiplicity. Indeed, recall that if $q$ is a complex-valued quadratic form satisfying the averaging condition (\ref{ass_dyn_1}), then the spectrum of $q^w(x,D)$ is discrete and consists of eigenvalues of finite algebraic multiplicity, taking the form
\begin{equation}\label{eq:eig_formula_kfp}
\text{Spec}(q^w(x,D)) =\li\{\sum_{\lambda \in K \cap \text{Spec}(F)}\frac{\lambda}{i}(r_\lambda+2k_\lambda):  k_\lambda \in \N_0 \ri\}, \quad K = \{z \in \C: \im z >0\},
\end{equation}
where $F$ is the fundamental matrix (\ref{eq:fund_matrix}) of the quadratic form (\ref{eq:kfp_v_diag}) and $r_\lambda \geq 1$ is the algebraic multiplicity associated to the eigenvalue $\lambda$ of $F$ \cite[Theorem 1.2.2]{hitrik_starov_quadraticoperators}. In fact, if $q^w(x,D)$ satisfies the $\mathcal{PT}$--symmetry condition (\ref{ass_pt_sym}), \cite[Proposition 2.1]{Caliceti2012QuadraticPT} and (\ref{eq:eig_formula_kfp}) together show that
\begin{equation}\label{eq:real_im_spec}
    \text{Spec}(q^w(x,D)) \subseteq \R \Longleftrightarrow \text{Spec}(F) \subseteq i\R.
\end{equation}
The $4n \times 4n$ fundamental matrix $F$ associated to (\ref{eq:kfp_v_diag}) in canonical coordinates $(X,\Xi)$ takes the form
\begin{equation}\label{eq:kfp_fund}
    F = \frac12 \begin{bmatrix}
        0 & ibI_n& 0 & 0\\
        -ib\Lambda& 0 & 0 & I_n\\
        0& 0 &  0 & ib\Lambda\\
        0 & -I_n & -ibI_n & 0 \\
    \end{bmatrix}.
\end{equation}
Note that each block in $F$ is a diagonal matrix, and hence the characteristic polynomial of $F$ is given by 
\begin{equation}
    \chi_F(\lambda) = \prod_{j=1}^n \chi_{F_j}(\lambda), \quad F_j = \frac12 \begin{bmatrix}
        0 & ib & 0 & 0 \\
        -ib\mu_j & 0 & 0 & 1 \\
        0 & 0 & 0 & ib\mu_j \\
        0 & -1 & -ib & 0 \\
    \end{bmatrix}.
\end{equation}
Straightforward computations then show that 
\begin{equation}
    \chi_F(\lambda) = \prod_{j=1}^n \li(\li(\lambda^2-\frac{b^2\mu_j}{4}\ri)^2+\frac{\lambda^2}{4}\ri),
\end{equation}
which has roots 
\begin{equation}
    \lambda= \frac14 (\pm i \pm \sqrt{4b^2 \mu_j-1}).
\end{equation}
    If we now assume that $0 \neq b^2\mu_j \leq \frac14$ for $j=1,2,\ldots,n,$ then each eigenvalue of $F$ is purely imaginary, and hence (\ref{eq:eig_formula_kfp}) implies that the spectrum of $q^w(x,D)$ is purely real, thus satisfying the assumption (\ref{ass_spec_real}). If $0 \neq b^2 \mu_j < \frac14$ for $j=1,2,\ldots,n,$ each $F_j$ has four distinct eigenvalues and $F$ is hence diagonalizable. Moreover, if $b^2\mu_j=\frac14$ for some $j,$ then it is clear that the first three columns of
    \begin{equation}
        F_j+\frac{i}{4} = \frac12\begin{bmatrix}
        \frac{i}{2} & ib & 0 & 0 \\
        -ib\mu_j & \frac{i}{2} & 0 & 1 \\
        0 & 0 & \frac{i}{2} & ib\mu_j \\
        0 & -1 & -ib & \frac{i}{2} \\
    \end{bmatrix}
    \end{equation}
    are linearly independent, showing that $\dim \ker (F_j+\frac{i}{4}) = 1$ and hence demonstrating that $F$ is not diagonalizable. Thus, for $0 \neq b^2 \mu_j \leq \frac14, j=1,2,\ldots,n,$ the assumptions (\ref{ass_dyn_1}), (\ref{ass_pt_sym}), and (\ref{ass_spec_real}) of Theorem \ref{thm_pt} hold for the quadratic Kramers-Fokker-Planck operator (\ref{eq:kfp_op}), 
    so we conclude that the operator $q^w(x,D)$ is weakly similar to a self-adjoint operator in the sense of definition (\ref{eq:similar}) if and only if $b^2\mu_j \neq \frac14$ for all $j=1,2,\ldots,n.$

\appendix
\section{Density of holomorphic polynomials in Bargmann spaces}\label{appendix_a}
The purpose of this appendix is to provide a proof of the following lemma, originally due to \cite[Section 2]{Viola2013}. Our proof is similar, but is somewhat more direct.
\begin{lemma}\label{lemma_dense}
    Let $\Phi: \C^n \to \R$ be a strictly convex quadratic weight. Then, the space \textup{(\ref{eq:hol_poly})} of holomorphic polynomials is dense in $H_{\Phi}(\C^n).$
\end{lemma}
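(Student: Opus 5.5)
The plan is to reduce to the case of a weight that is a function of $|x|$ only, and then use Taylor expansion plus a radial averaging argument.

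\emph{Step 1: decomposition of the weight.} Write the real quadratic form as
\begin{equation*}
\Phi(x) = \Phi_{\mathrm{herm}}(x) + \re f(x), \qquad \Phi_{\mathrm{herm}}(x)=\sum_{j,k}\partial^2_{x_j\overline{x}_k}\Phi\, x_j\overline{x}_k ,
\end{equation*}
where $f$ is a holomorphic quadratic form. Multiplication by $e^{-f}$ is a unitary map $H_\Phi(\C^n)\to H_{\Phi_{\mathrm{herm}}}(\C^n)$. It does \emph{not} preserve polynomials, so this conjugation should not be used directly.

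\emph{Step 2: use dilations instead.} The more robust route avoids conjugation and works with dilations. For $u\in H_\Phi(\C^n)$ and $0<r<1$, set $u_r(x)=u(rx)$. We have
\begin{equation*}
\|u_r\|^2_{H_\Phi}=r^{-2n}\int |u(y)|^2 e^{-2\Phi(y)/r^2}\,d\mu(y).
\end{equation*}
Since $\Phi(y)/r^2\ge \Phi(y)$ (this uses $\Phi\ge0$, which follows from strict convexity of a quadratic form), $u_r$ lies in $H_\Phi$. Moreover, $u_r\to u$ in $H_\Phi$ as $r\to1^-$. To see this, approximate $u$ first by compactly supported continuous functions in $L^2_\Phi$, and use the uniform bound $\|u_r\|_{H_\Phi}\le r^{-n}\|u\|_{H_\Phi}$ together with continuity of dilation on $C_c$.

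\emph{Step 3: approximate $u_r$ by polynomials.} This is the main step. The function $u$ satisfies the pointwise bound $|u(x)|\le C e^{\Phi(x)}\le Ce^{C'|x|^2}$; this follows from the mean value inequality on unit balls together with $\Phi(x+w)\le \Phi(x)+C|x|+C$. Hence $u$ is entire of order at most $2$, and the Taylor series $u=\sum_\alpha c_\alpha x^\alpha$ converges locally uniformly. Cauchy estimates on polydiscs give
\begin{equation*}
|c_\alpha|\le \inf_{\rho}\rho^{-|\alpha|}\sup_{|x|\le\rho}|u| .
\end{equation*}
The tail of $u_r$ is $\sum_{|\alpha|>N} c_\alpha r^{|\alpha|}x^\alpha$. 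I will bound it in $L^2_\Phi$ by combining the Cauchy estimate at radius $\rho$ with the Gaussian moment bounds
\begin{equation*}
\|x^\alpha\|_{L^2_\Phi}\lesssim \|x^\alpha\|_{L^2(e^{-2c|x|^2})},
\end{equation*}
where $c>0$ comes from strict convexity, $\Phi(x)\ge c|x|^2$. Both sides can be computed explicitly and are of size roughly $(\alpha!)^{1/2}c^{-|\alpha|/2}$. The hardest part is making the exponents match. The bound $|u|\le Ce^{\Phi}$ gives $|c_\alpha|\lesssim (C''/|\alpha|)^{|\alpha|/2}$, with the constant governed by the \emph{upper} bound $\Phi\le C|x|^2$, while the moments are governed by the \emph{lower} bound. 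A mismatch between these constants could leave a geometric factor larger than $1$ that the factor $r^{|\alpha|}$ does not beat unless $r$ is small.

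\emph{Step 4: fallback if the constants do not match.} To resolve this, I would handle only the tail uniformly, using an $L^2$ estimate rather than sup bounds. Decompose $u_r$ into homogeneous parts $u^{(k)}(x)=\frac{1}{2\pi}\int_0^{2\pi} u(e^{i\theta}x)e^{-ik\theta}\,d\theta$. The pieces $u^{(k)}$ are orthogonal in $L^2_\Phi$ exactly when $\Phi$ is circle invariant, i.e.\ $f=0$. In the general case, I would instead compare with the circle-invariant weight $\Phi_{\mathrm{herm}}$, which satisfies $\Phi_{\mathrm{herm}}\sim|x|^2$ because it is the average of $\Phi$ over the circle action. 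Since dilation by $r$ improves the weight by a factor $r^{-2}$, the loss in constants can be absorbed once $r$ is close enough to $1$ relative to the ratio of the weights. Combining Steps 2 and 3 (or 4) with a diagonal argument gives polynomials converging to $u$ in $H_\Phi(\C^n)$.
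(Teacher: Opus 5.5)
There is a genuine gap, and it is the one you flagged yourself in Step 3. Nothing in Steps 3--4 produces polynomial approximants of $u_r$ for $r$ close to $1$ when $\Phi$ is not circle-invariant, and this is not an artifact of lossy constants. Take $n=1$, $x=s+it$, $\Phi(x)=\frac12|x|^2+\frac14\re(x^2)=\frac34 s^2+\frac14 t^2$, and $u(x)=e^{x^2/2}$. Then $|u|^2e^{-2\Phi}=e^{-s^2/2-3t^2/2}$, so $u\in H_\Phi(\C)$. In polar coordinates $\|x^m\|_{H_\Phi}^2=\frac{m!}{2}\int_0^{2\pi}(1+\frac12\cos 2\theta)^{-m-1}\,d\theta$, which is $m!\,2^m$ up to powers of $m$. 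Hence the degree-$2k$ Taylor term $\frac{(r^2/2)^k}{k!}x^{2k}$ of $u_r$ has $H_\Phi$-norm $\asymp (2r^2)^k k^{-1/2}$. So for every $r\in(2^{-1/2},1)$ the Taylor series of $u_r$ diverges in $H_\Phi$: Step 3 fails exactly in the regime $r\to 1^-$ that Step 2 needs.

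Step 4 does not rescue this. In one variable the homogeneous pieces are just the Taylor terms. The rotations $u\mapsto u(e^{i\theta}\,\cdot)$ are not even bounded on $H_\Phi$: here $u(e^{i\pi/2}x)=e^{-x^2/2}\notin H_\Phi$, so circle averaging is unavailable. The comparison with $\Phi_{\mathrm{herm}}$ also runs the wrong way. The gain $r^{-2}$ from dilation tends to $1$ as $r\to1$, so it can absorb a fixed loss such as $\sup_{x\neq 0}\Phi(x)/\Phi_{\mathrm{herm}}(x)$ only for $r$ bounded \emph{away} from $1$. The diagonal argument therefore has nothing to combine.

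The missing idea is a way to neutralize the pluriharmonic part $\re f$ of $\Phi$ while keeping polynomials as polynomials. Complex-linear changes of variables cannot do this, since they preserve the splitting $\Phi=\Phi_{\mathrm{herm}}+\re f$. The paper's route works as follows.

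It uses an FBI transform $T:L^2(\R^n)\to H_\Phi(\C^n)$ with $\kappa_T(\R^{2n})=\Lambda_\Phi$. Strict convexity of $\Phi$ makes $\{\xi=0\}$ strictly positive relative to $\Lambda_\Phi$, so $\kappa_T^{-1}(\{\xi=0\})=\{\eta=Ay\}$ with $A$ symmetric and $\im A>0$. Quadratic stationary phase then gives $T(e^{iAy\cdot y/2})=C\neq 0$, and Egorov's theorem shows that $T$ maps each $y^\alpha e^{iAy\cdot y/2}$ to a holomorphic polynomial. These functions span a dense subspace of $L^2(\R^n)$: if $g\in L^2(\R^n)$ is orthogonal to all of them, the Fourier transform of $\overline{g}e^{iAy\cdot y/2}$ is entire with all derivatives vanishing at $0$, forcing $g=0$. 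Unitarity of $T$ then transfers density to $H_\Phi(\C^n)$.

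Your dilation-and-Taylor argument is correct when $f=0$, and there it is not even needed: the homogeneous components are orthogonal, and the Taylor series of any $u\in H_\Phi$ converges by Parseval in $\theta$. The general strictly convex case needs an exact identification of this kind.
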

\begin{proof}
    It is clear that $\mathcal{S}_1 \subseteq H_\Phi(\C^n).$ Given a strictly convex quadratic weight $\Phi,$ we note that there exists a holomorphic quadratic form $\varphi: \C^n_x \times \C^n_y \to \C$ satisfying \begin{equation}
    \im \partial^2_{yy} \varphi >0, \quad \det \partial_{xy}^2 \varphi \neq 0,
    \end{equation}
    such that the canonical transformation $\kappa$ implicitly defined via
    \begin{equation}
    \kappa:(y,-\partial_y\varphi(x,y)) \to (x,\partial_x \varphi(x,y)), \quad x,y \in \C^n, 
    \end{equation}
    satisfies $\kappa(\R^{2n}) = \Lambda_\Phi$ (see \cite[Section 13.3]{Zworski2012}). The strict convexity of $\Phi$ implies that the Lagrangian manifold $\Lambda=\{(x,\xi) \in \C^{2n}: \xi =0 \}$ is strictly positive with respect to $\Lambda_\Phi$ (see \cite[Proposition A.9]{Caliceti2012QuadraticPT}). Thus, $\kappa^{-1}(\Lambda)$ is strictly positive with respect to $\kappa^{-1}(\Lambda_\Phi)=\R^{2n}$ and therefore is of the form \begin{equation}
        \kappa^{-1}(\Lambda) = \{(y,\eta) \in \C^{2n}: \eta = Ay\}
    \end{equation}
    for some complex $n \times n$ symmetric matrix $A$ with $\im A>0$ \cite[Example A.6]{Caliceti2012QuadraticPT}. Letting $T: L^2(\R^n) \to H_\Phi(\C^n)$ denote the FBI transform associated to $\varphi,$ quadratic stationary phase \cite[Lemma 13.2]{Zworski2012} implies that
    \begin{equation}
        T(e^{\frac{iAy \cdot y}{2}}) = Ce^{ig(x)}, \quad C \neq 0,
    \end{equation}
    for the holomorphic quadratic form 
    \begin{equation}
    g(x) = \text{vc}_{y \in \C^{n}} \li(\varphi(x,y) + \frac{Ay \cdot y}{2}\ri), \quad x \in \C^n    
    \end{equation}
    (here ``vc'' stands for critical value), and \begin{equation}
        \kappa(\{(y,Ay):y \in \C^n\}) = \{(x,g'(x)): x \in \C^n\}=\{(x,\xi) \in \C^{2n}: \xi =0\},
    \end{equation} 
    which implies that $g=0$ and lets us conclude that
    \begin{equation}\label{eq:ground_state}
        T(e^{\frac{iAy \cdot y}{2}}) = C, \quad C \neq 0.
    \end{equation}
    We then recall that \begin{equation}
        \mathcal{S}=\text{span}\{y^\alpha e^{\frac{iAy\cdot y}{2}}:  \alpha \in \N_0^n \}
    \end{equation}
    is dense in $L^2(\R^n)$ (see \cite[Lemma 3.12]{Sjostrand1974}). Indeed, suppose $f \in L^2(\R^n)$ is such that 
    \begin{equation}
        \int_{\R^n} y^\alpha e^{\frac{i Ay \cdot y}{2}}\overline{f(y)}dy = 0, \quad \forall \alpha \in \N_0^n.
    \end{equation}
    Letting $\mathcal{F}$ denote the Fourier transform and noting that $\im A>0,$ one then has that $\mathcal{F}(\overline{f} e^{\frac{iAy \cdot y}{2}})$ is entire on $\C^n$ and 
    \begin{equation}
        \mathcal{F}(\overline{f}y^\alpha e^{\frac{iAy \cdot y}{2}})(0) = (-1)^\alpha D^\alpha|_{\xi=0} \mathcal{F}(\overline{f} e^{\frac{iAy \cdot y}{2}})=0, \quad \forall \alpha \in \N_0^n,
    \end{equation}
    so $\mathcal{F}(\overline{f} e^{\frac{iAy \cdot y}{2}})=0$ and hence $f=0.$
    Finally, recalling the space $\mathcal{S}_1$ of holomorphic polynomials in (\ref{eq:hol_poly}), by Egorov's theorem, one has that $T(\mathcal{S}) \subseteq \mathcal{S}_1,$ so by the unitarity of $T,$ we conclude that the holomorphic polynomials are dense in $H_\Phi(\C^n).$
\end{proof}
\bibliographystyle{alpha}
\bibliography{refs_pt}
\end{document}